\title{Intrinsic Hardy-Orlicz Spaces of conformal mappings}
\author{Pekka Koskela and Sita Benedict}
\newtheorem{theorem}{Theorem}[section]
\newtheorem{lemma}[theorem]{Lemma}
\newtheorem{corollary}[theorem]{Corollary}
\newtheorem*{theorem*}{Theorem}
\newtheorem*{proposition*}{Proposition}
\newtheorem*{corollary*}{Corollary}
\newtheorem*{ghthm}{Gehring-Hayman Theorem}
\newtheorem*{remark*}{Remark}
\numberwithin{equation}{section}
\newcommand{\D}{\mathbb{D}} 
\newcommand{\Sn}{\partial\D} 
\begin{document}

\begin{abstract}
We define a new type of Hardy-Orlicz spaces of conformal mappings on the unit disk where in place of the value $|f(x)|$ we consider the intrinsic path distance between $f(x)$ and $f(0)$ in the image domain. We show that if the Orlicz function is doubling then these two spaces are actually the same, and we give an example when the intrinsic Hardy-Orlicz space is strictly smaller. 
\end{abstract}

\footnotetext{
{\it 2010 Mathematics Subject Classification: 30C35, 30H10}\\
{\it Key words and phrases. Hardy spaces, Hardy-Orlicz, conformal mappings on the unit disk}
\endgraf The authors were partially supported by the Academy of Finland grants
131477 and 263850.}

\maketitle
\section{Introduction}
Let $\psi: [0, \infty] \rightarrow [0, \infty]$ be a differentiable and strictly increasing function such that $\psi(0) = 0$; that is, a \textit{growth function}. A conformal map $f: \mathbb{D} \rightarrow \mathbb{C}$ belongs to the Hardy-Orlicz space $H^\psi$ if there exists $\delta > 0$ such that
\begin{eqnarray*}
\sup_{0<r<1}\int_{\Sn}\psi(\delta|f(r\omega)|)d\sigma < \infty,
\end{eqnarray*}
and to $H^\infty$ if
\begin{eqnarray*}
\sup_{0<r<1} M(r,f) < \infty,
\end{eqnarray*}
where
\begin{eqnarray*}
M(r,f) = \sup_{\omega \in \Sn} \{|f(r\omega)| \}
\end{eqnarray*} 
is the maximum modulus function on $0 < r< 1$. Some results on Hardy-Orlicz spaces can be found in \cite{hpsiqc}, \cite{hardyorlicz}, \cite{deeb2}, \cite{deeb1}, \cite{khalil}, and \cite{pcarleson}, however the exact definition of the spaces varies and the theory is not limited to conformal mappings on $\D$. In the case $\psi(t) = t^p, 0 < p < \infty$, see \cite{durenhp}. 

Since a conformal map induces a change of metric in $\D$, the intrinsic path distance  $d_I(f(x), f(0))$ in $f(\Omega)$ is in many occasions more natural than $|f(x)|$, see for example \cite{conformalmetrics}. We abbreviate ${|f(x)|_I = d_I(f(x), f(0))}$ and say that $f$ belongs to the intrinsic Hardy-Orlicz space $H_I^\psi$ if there exists $\delta > 0$ such that
\begin{eqnarray*}\label{hpsiintdef}
\sup_{0<r<1}\int_{\Sn}\psi(\delta|f(r\omega)|_I)d\sigma < \infty,
\end{eqnarray*}
and to $H_I^\infty$ if $|f(x)|_I$ is uniformally bounded on $\D$. Here $d\sigma$ denotes integration with respect to the length measure on $\Sn$.

If $f(\mathbb{D})$ is a bounded domain then $f\in H^\infty$ by definition. It is easy to see that there are conformal mappings that belong to $H^\infty$ but do not belong to $H^\infty_I$; one only has to consider a conformal mapping of the unit disk onto a bounded domain that spirals inward ad infinitum. For general $\psi$ it is not obvious whether there are conformal mappings belonging to $H^\psi$ but not to $H^\psi_I$. In this paper we show somewhat surprisingly that if $\psi$ is doubling then there are no such mappings.

\begin{theorem}\label{mainthm}
Let $f$ be a conformal mapping of $\D$. If $\psi$ is a doubling growth function then $f \in H^\psi$ if and only if $f \in H_I^\psi$. 
\end{theorem}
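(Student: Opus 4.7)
The direction $H_I^\psi \subseteq H^\psi$ will follow directly from
\begin{equation*}
|f(x)| \leq |f(0)| + |f(x) - f(0)| \leq |f(0)| + |f(x)|_I
\end{equation*}
(the second inequality uses that intrinsic distance dominates Euclidean distance), combined with the doubling property of $\psi$. Specifically, doubling yields $\psi(a+b) \leq K[\psi(a) + \psi(b)]$ and $\psi(\lambda t) \leq C(\lambda)\psi(t)$; these allow us to convert any uniform bound on $\int_{\Sn} \psi(\delta |f|_I)\, d\sigma$ into one on $\int_{\Sn} \psi(\delta |f|)\, d\sigma$ after rescaling $\delta$ and adding the constant $\psi$ evaluated at $\delta|f(0)|$.

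For the substantive direction, $H^\psi \subseteq H_I^\psi$, the central tool is the Gehring--Hayman theorem. Since the hyperbolic geodesic in $\D$ from $0$ to $r\omega$ is the Euclidean segment $[0, r\omega]$, Gehring--Hayman yields
\begin{equation*}
|f(r\omega)|_I \leq \int_0^r |f'(s\omega)|\, ds \leq C\, |f(r\omega)|_I,
\end{equation*}
with $C$ a universal constant. The problem thereby reduces to bounding $\sup_{r} \int_{\Sn} \psi\!\left(\delta' \int_0^r |f'(s\omega)|\, ds\right) d\sigma$ in terms of $\sup_{r} \int_{\Sn} \psi(\delta |f(r\omega)|)\, d\sigma$.

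To carry this out, the plan is to partition $[0, r]$ into hyperbolic-unit pieces $[s_k, s_{k+1}]$, so that $1 - s_{k+1} \asymp (1-s_k)/2$. Koebe distortion then gives
\begin{equation*}
\int_{s_k}^{s_{k+1}} |f'(s\omega)|\, ds \asymp |f(s_{k+1}\omega) - f(s_k\omega)|,
\end{equation*}
so that the radial integral is comparable to the sum $\sum_k |f(s_{k+1}\omega) - f(s_k\omega)|$. I would then invoke the doubling of $\psi$ to push $\psi$ inside this finite sum, and control the resulting quantity, after integration in $\omega$, by a non-tangential maximal-function estimate for $|f|$ in the doubling Orlicz norm, which itself comes from standard Hardy-space techniques.

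The main obstacle I anticipate is handling the final summation: the Euclidean-increment sum $\sum_k |f(s_{k+1}\omega) - f(s_k\omega)|$ can, pointwise in $\omega$, be much larger than $\sup_s |f(s\omega)|$ — this is precisely the phenomenon behind the spiraling-domain example in the introduction — so any termwise bound will be insufficient. The doubling hypothesis on $\psi$ is exactly what allows this accumulation to be absorbed into an Orlicz bound after averaging in $\omega$; without it one recovers the $H^\infty$-versus-$H_I^\infty$ discrepancy noted in the introduction, which shows that the doubling assumption is essential and not merely technical.
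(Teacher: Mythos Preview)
Your easy direction $H_I^\psi\subseteq H^\psi$ is fine. For the hard direction $H^\psi\subseteq H_I^\psi$, however, the proposal stops precisely at the crux. You correctly reduce to controlling the telescoping sum $\sum_k |f(s_{k+1}\omega)-f(s_k\omega)|$, with the number of summands $N\asymp\log\frac{1}{1-r}\to\infty$, and you correctly note that termwise domination by $f^*(\omega)$ is useless (doubling only gives $\psi(Na)\lesssim N^{\alpha}\psi(a)$, and $N$ is unbounded). But your closing assertion, that ``the doubling hypothesis on $\psi$ is exactly what allows this accumulation to be absorbed into an Orlicz bound after averaging in $\omega$,'' is a statement of hope rather than an argument: you supply no mechanism explaining why integration in $\omega$ produces the needed gain, and ``standard Hardy-space techniques'' together with the maximal-function bound $\int\psi(\epsilon f^*)<\infty$ only control each summand, not the infinite sum. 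This is the entire content of the theorem, so the proposal as it stands is essentially a restatement of what has to be proved.

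The paper's route is different and supplies exactly the missing mechanism. Rather than summing radial increments, it runs a good-$\lambda$ inequality for $|f(\omega)|_I$: one Whitney-decomposes $U(\lambda)=\{f_I^*>\lambda\}=\bigcup_j S_{x_j}$ and invokes a modulus-of-curve-families estimate (Lemma~\ref{moduluscons}) asserting
\[
\sigma\bigl(\{\omega\in S_x:\ d_I(f(\omega),f(x))>M\,d(f(x),\partial f(\D))\}\bigr)\le C\,\sigma(S_x)(\log M)^{-1}.
\]
This is the quantitative input your outline lacks: it says that for \emph{most} $\omega\in S_{x_j}$ the intrinsic distance from $f(x_j)$ to the boundary value $f(\omega)$ is already controlled by the single quantity $d(f(x_j),\partial f(\D))\le f^*(\omega)$, so no infinite summation is needed. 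Combined with Lemma~\ref{ghapp} (to pass from $f_I^*$ back to $|f|_I$) and integration against $\psi'(\lambda)\,d\lambda$, one obtains the desired Orlicz bound; doubling enters only at the very end, to absorb a fixed multiplicative constant $(M+1)C$ inside $\psi$. Your hyperbolic-partition strategy does not access this modulus estimate, and without it (or some comparably strong substitute) I do not see how to close the argument.
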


Thus, our theorem gives a new characterization of the conformal mappings belonging to the classical $H^p$ spaces for all $0 < p < \infty$. Using the Gehring-Hayman inequality we also obtain the following. 
\begin{corollary}\label{mainthmcor}
Let $f$ be a conformal mapping of $\D$ and $\psi$ a doubling growth function. Then\begin{eqnarray*}
f \in H^\psi \;\;\; \textnormal{if and only if}\;\;\; \int_{\Sn} \psi \left(\int_0^1 |f'(r\omega)| dr \right) d\sigma < \infty.
\end{eqnarray*}
\end{corollary}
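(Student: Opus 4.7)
The plan is to use Theorem \ref{mainthm} to reduce the question to one about $H_I^\psi$, then invoke the Gehring-Hayman theorem to relate $|f(r\omega)|_I$ to the line integral $\int_0^r |f'(t\omega)|\,dt$. Once pointwise comparability of these two quantities is in hand, the doubling of $\psi$ together with monotone convergence as $r\uparrow 1$ will finish the argument.

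The geometric core is this: for each $\omega\in\Sn$, the radius $t\mapsto t\omega$, $t\in[0,r]$, is a hyperbolic geodesic in $\D$, and a conformal map sends hyperbolic geodesics to hyperbolic geodesics. The Gehring-Hayman theorem then supplies a universal constant $C\ge 1$ saying that the Euclidean length of the image geodesic is at most $C$ times the intrinsic distance between its endpoints, so that
\[
\frac{1}{C}\int_0^r|f'(t\omega)|\,dt \;\le\; |f(r\omega)|_I \;\le\; \int_0^r|f'(t\omega)|\,dt,
\]
the upper bound being immediate from the definition of $d_I$ applied to the image of the radius.

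For the forward direction I would take $\delta>0$ provided by $f\in H_I^\psi$ (which holds by Theorem \ref{mainthm}), combine the lower bound above with monotonicity of $\psi$ to obtain
\[
\sup_{0<r<1}\int_{\Sn}\psi\!\left(\tfrac{\delta}{C}\int_0^r|f'(t\omega)|\,dt\right)d\sigma<\infty,
\]
use the doubling property of $\psi$ to replace $\delta/C$ by $1$ at the cost of a harmless multiplicative constant, and then let $r\uparrow 1$ via the monotone convergence theorem. The converse is easier: the upper bound in the display shows $|f(r\omega)|_I\le\int_0^1|f'(t\omega)|\,dt$ uniformly in $r$, so finiteness of the integral in the statement immediately forces $f\in H_I^\psi$, and Theorem \ref{mainthm} upgrades this to $f\in H^\psi$.

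The only nontrivial input is the Gehring-Hayman inequality; the doubling condition is precisely what is needed to reconcile the implicit $\delta$ in the definition of $H^\psi$ with the coefficient-free integral in the conclusion, and I do not expect any further obstacle beyond this bookkeeping.
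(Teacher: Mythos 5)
Your argument is correct and follows essentially the same route as the paper: both proofs reduce to Theorem \ref{mainthm} and the two-sided Gehring-Hayman comparison between $|f(r\omega)|_I$ and $\int_0^r|f'(t\omega)|\,dt$, with Fatou/monotone convergence and the doubling of $\psi$ handling the passage to the boundary and the $\delta$-bookkeeping. The only cosmetic difference is that in the converse direction the paper invokes Lemma \ref{fstarclassical} directly rather than citing the easy inclusion $H_I^\psi\subseteq H^\psi$ from Theorem \ref{mainthm} as you do, but these are interchangeable.
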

We will give examples to show that if the $\psi$ is not doubling then the result may fail, see Section 5.

The authors would like to dedicate this paper to both Kari Astala and Michel Zinsmeister, whose ideas in \cite{hpqc} and \cite{zins} respectively have contributed to several of the techniques used in this paper. 

\section{Basic Definitions}
Denote by $\mathbb{D} = \{x\in \mathbb{C} : |x| < 1 \}$ the open unit disk in the complex plane, by $\partial\D$ its boundary the unit circle, by $B(x,r)$ the open disk centered at $x \in \mathbb{C}$ with radius $r > 0$ and by $\partial B(x, r)$ its boundary. An analytic function $f: \Omega \rightarrow \mathbb{C}$ on a domain $\Omega \subset \mathbb{C}$ is called conformal if it is also injective. In this paper the conformal mappings we consider will all have as domain $\D$.

A curve in a domain $\Omega \subset \mathbb{C}$ is a continuous mapping $\gamma: I \rightarrow \Omega$  of an interval $I\subseteq \mathbb{R}$. The image set $\gamma(I)$ is also denoted by $\gamma$.  The euclidean length of the curve is denoted by length$(\gamma) \in [0, \infty]$. If length$(\gamma) < \infty$ then $\gamma$ is \textit{rectifiable}. A curve is called \textit{locally rectifiable} if length$(\gamma|_{[a,b]})<\infty$ for every closed sub-interval $[a,b] \subset I$. We allow that the endpoints of a curve in $\D$ may belong to $\Sn$.

Given a simply connected domain $\Omega$ strictly contained in $\mathbb{C}$, the Riemann Mapping Theorem asserts the existence of a conformal function $f$ mapping $\mathbb{D}$ onto $\Omega$. Recall that if $\gamma$ is a curve in $\mathbb{D}$ then $f\circ \gamma$ is a curve in $\Omega$, and when integrating with respect to arc length we have
\begin{eqnarray*}
\text{length}(f\circ\gamma) = \int_\gamma |f'(z)|\: |dz|.
\end{eqnarray*}
Given $u, v \in \Omega$ the intrinsic path distance between $u$ and $v$ in $\Omega$ is
\begin{eqnarray*}
d_I(u,v) := \inf \text{length}(\gamma),
\end{eqnarray*}
where the infimum is taken over all curves $\gamma$ in  $\Omega$ with endpoints $u$ and $v$. This defines a metric on $\Omega$. If $f$ maps $\D$ conformally onto $\Omega$ and $x = f^{-1}(u)$ and $y = f^{-1}(v)$ then clearly
\begin{eqnarray*}
d_I(u, v) = \inf \text{length}(f \circ \gamma),
\end{eqnarray*}
where the infimum is taken over all curves in $\D$ with endpoints $x$ and $y$. We abbreviate then $d_I(f(x), f(0))$ to $|f(x)|_I$ for each $x\in\mathbb{D}$, and we denote usual metric space notions with respect to $d_I$ with the addition of the subscript $_I$. For instance, the diameter of a set $E\subset \Omega$ in the metric $d_I$ will be denoted by $\textnormal{diam}_I(E)$.    

It is well known that if $f$ is a conformal mapping of $\D$ then the radial limit
\begin{eqnarray*}
\lim_{r\rightarrow1}f(r\omega) = f(\omega)
\end{eqnarray*}
exists and length($f([0, \omega)) < \infty$ for almost every $\omega \in \Sn$, see for example \cite{pommerenke}. If $f(\omega)$ exists we define for each $x\in\D$ the intrinsic path distance
\begin{eqnarray*}
d_I(f(x), f(\omega)) = \inf \text{length}(f \circ \gamma),
\end{eqnarray*}
where the infimum is taken over all curves $\gamma$ in $\D$ with endpoints $x$ and $\omega$, and we abbreviate like before 
\begin{eqnarray*}
|f(\omega)|_I := d_I(f(0), f(\omega)).
\end{eqnarray*}

We will mostly be maneuvering through $\D$ via 'Whitney-type disks,' their corresponding 'shadows' on $\Sn$ as well as the related Stolz cones, which we define here. Given $x \in \D$ let the Whitney disk centered at $x$ be defined as $B_x = B(x, \frac{1-|x|}{2})$, and let $S_x = \{\frac{z}{|z|}: z\in B_x\}$ denote its shadow on $\Sn$. For each $\omega \in \Sn$ let
\begin{eqnarray*}
\Gamma(\omega) = \bigcup\{B_{t\omega} : 0 \leq t < 1\}
\end{eqnarray*}
be the Stolz cone at $\omega$. We associate with each conformal mapping $f: \D \rightarrow \mathbb{C}$ its non-tangential maximal function with respect to the euclidean metric as
\begin{eqnarray*}
f^*(\omega) = \sup_{x\in\Gamma(\omega)} |f(x)|, \omega \in \Sn,
\end{eqnarray*}
and its non-tangential maximal function with respect to $d_I$ as
\begin{eqnarray*}
f_I^*(\omega) = \sup_{x\in\Gamma(\omega)} |f(x)|_I, \omega \in \Sn.
\end{eqnarray*}. 

\section{Koebe, Modulus estimates, and Gehring-Hayman}
We begin this section by stating two corollaries to the well-known Koebe distortion theorem, see \cite[Corollaries 1.4 and 1.5]{pommerenke}.
\begin{lemma}\label{koebecorr2}
There exists a universal constant $C$ such that if $f$ is a conformal mapping of $\D$ then
\begin{eqnarray}\label{distafequiv}
\frac{1}{C}d(f(x), \partial f(\D)) \leq |f'(x)|(1-|x|) \leq Cd(f(x), \partial f(\D)) 
\end{eqnarray}
for all $x\in \D$.
\end{lemma}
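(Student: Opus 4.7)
The plan is to deduce both inequalities from the Koebe $1/4$ theorem, using a Möbius change of coordinates to normalize $f$ at $x$. For the bound $|f'(x)|(1-|x|) \leq C\, d(f(x),\partial f(\D))$, I would let $T_x(z) = (z+x)/(1+\bar{x}z)$ be the disk automorphism sending $0$ to $x$, for which $T_x'(0) = 1-|x|^2$, and set
\[
F(z) = \frac{f(T_x(z)) - f(x)}{f'(x)(1-|x|^2)}.
\]
Then $F$ is univalent on $\D$ with $F(0) = 0$ and $F'(0) = 1$, so by the Koebe $1/4$ theorem we have $F(\D) \supset B(0,1/4)$. Un-normalizing shows that $B(f(x), |f'(x)|(1-|x|^2)/4) \subset f(\D)$, and since $1 - |x|^2 \geq 1 - |x|$ this yields $|f'(x)|(1-|x|) \leq 4\, d(f(x),\partial f(\D))$.

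For the reverse inequality, I would pass to a univalent inverse branch of $f$. Writing $\delta = d(f(x),\partial f(\D))$, the ball $B(f(x),\delta)$ lies in $f(\D)$, so there is an injective holomorphic $h$ on $B(f(x),\delta)$ with $h(f(x)) = x$ and $h'(f(x)) = 1/f'(x)$. Then $H(w) := h(f(x) + \delta w)$ maps $\D$ univalently into $\D$ with $H(0) = x$ and $|H'(0)| = \delta/|f'(x)|$. Composing with the disk automorphism $\phi_x(z) = (z-x)/(1-\bar{x}z)$ yields a self-map of $\D$ fixing $0$, so the Schwarz lemma applied to $\phi_x \circ H$ gives
\[
\frac{1}{1-|x|^2}\cdot\frac{\delta}{|f'(x)|} = |\phi_x'(x)|\,|H'(0)| \leq 1,
\]
and hence $d(f(x),\partial f(\D)) \leq |f'(x)|(1-|x|^2) \leq 2|f'(x)|(1-|x|)$, as required.

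No serious obstacle is expected; the argument is entirely classical. Both estimates reduce to one of the two standard normalizations of a univalent function: $F(0)=0$, $F'(0)=1$ (which invokes Koebe's $1/4$ theorem to control the image from inside) and $H(0)\in\D$, $H(\D)\subset\D$ (which invokes the Schwarz lemma after a Möbius correction to control $|H'(0)|$). This is exactly the route taken in the corollaries to the Koebe distortion theorem cited from \cite{pommerenke}, and it yields the desired inequalities with a universal constant $C$ that can be taken to be $4$.
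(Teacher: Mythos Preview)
Your argument is correct and entirely classical; both halves are carried out cleanly, and the constant $C=4$ is what one expects. Note, however, that the paper does not give its own proof of this lemma at all: it simply records the inequalities as corollaries of the Koebe distortion theorem and refers to \cite[Corollaries~1.4 and~1.5]{pommerenke}. What you have written is precisely the standard derivation behind that reference (Koebe $\tfrac14$ via the disk automorphism $T_x$ for the upper bound, and Schwarz--Pick applied to a local inverse branch for the lower bound), so there is nothing to compare beyond observing that you have supplied the details the paper chose to outsource.
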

The notation $d_h(x,y)$ in the next statement denotes the hyperbolic metric on $\mathbb{D}$.
\begin{lemma}\label{koebecorr}
If $f:\mathbb{D}\rightarrow \mathbb{C}$ is conformal then
\begin{eqnarray*}
e^{-6d_h(x,y)} \leq \frac{|f'(x)|}{|f'(y)|} \leq e^{6d_h(x,y)}
\end{eqnarray*}
for any $x,y\in \mathbb{D}.$
\end{lemma}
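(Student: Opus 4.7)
My plan is to derive the estimate by integrating the logarithmic derivative $(\log f')' = f''/f'$ along a curve joining $x$ and $y$, using the Koebe distortion theorem to control $|f''/f'|$ pointwise by the hyperbolic density.

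The first ingredient is the pointwise bound $|f''(z)/f'(z)| \leq 6/(1-|z|^2)$ for every $z \in \mathbb{D}$. This is a standard consequence of the sharp form of the Koebe distortion theorem: pre-composing $f$ with the Möbius automorphism of $\mathbb{D}$ sending $0$ to $z$ and post-composing with an affine normalization reduces to a map in the class $\mathcal{S}$; applying the sharp distortion estimates to this normalized map yields
\[
\left| \frac{z f''(z)}{f'(z)} - \frac{2|z|^2}{1-|z|^2} \right| \leq \frac{4|z|}{1-|z|^2},
\]
and the triangle inequality together with the elementary bound $2|z| + 4 \leq 6$ for $|z| \leq 1$ then gives the claim.

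With this pointwise bound in hand, since $f$ is conformal, $f'$ has no zeros and a holomorphic branch of $\log f'$ exists on $\mathbb{D}$. For any rectifiable curve $\gamma \subset \mathbb{D}$ from $x$ to $y$,
\[
\log f'(y) - \log f'(x) = \int_\gamma \frac{f''(z)}{f'(z)}\, dz,
\]
so taking real parts and invoking the pointwise estimate,
\[
\left| \log \frac{|f'(y)|}{|f'(x)|} \right| \leq \int_\gamma \frac{6}{1 - |z|^2}\, |dz|.
\]
Taking the infimum over all such curves $\gamma$ turns the right-hand side into $6\, d_h(x,y)$ in the normalization $d_h(x,y) = \inf_\gamma \int_\gamma |dz|/(1-|z|^2)$; exponentiating and then swapping the roles of $x$ and $y$ produces both inequalities of the lemma.

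There is no serious obstacle to overcome: this is a classical textbook computation. The only item requiring care is confirming that the constant $6$ coming out of the Koebe estimate matches the constant $6$ in the statement, which is purely a question of which normalization of the hyperbolic metric is in force.
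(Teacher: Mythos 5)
Your proof is correct, and it is essentially the standard argument behind Pommerenke's Corollary 1.5, which is exactly what the paper cites (the paper does not give its own proof of this lemma but refers to \cite[Corollaries 1.4 and 1.5]{pommerenke}). The pointwise bound $|f''(z)/f'(z)| \leq 6/(1-|z|^2)$ derived from the sharp distortion estimate is exactly right, and integrating $\log f'$ along paths and taking the infimum gives the desired inequality. One small point worth stating explicitly: the paper inherits Pommerenke's normalization of the hyperbolic metric, with density $1/(1-|z|^2)$, and your computation matches that convention; under the curvature-$(-1)$ normalization with density $2/(1-|z|^2)$ the constant would be $3$, not $6$.
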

This lemma tells us that $|f'|$ is roughly constant on Whitney disks $B_x = B(x, \frac{1-|x|}{2})$, $x \in \D$. Indeed if $x, y \in B_z$ for some $z\in\mathbb{D}$ then $d_h(x,y) \leq 2$, and so by setting $C = e^{12}$ we have
\begin{align}\label{harnack}
\frac1C\leq  \frac{|f'(x)|}{|f'(y)|}\leq C\;\;\text{for all}\;x,y\in B_z, z\in \D.
\end{align}
It follows easily that there is an absolute constant $C$ such that
\begin{align}\label{internaldiamupperbound}
\textnormal{diam}(f(B_x)) \leq \textnormal{diam}_I(f(B_x)) \leq Cd(f(x), \partial f(\D))
\end{align}
for all $x\in \D$.

We collect some basic facts about modulus of curve families, needed for the lemmas that follow. If $\Gamma$ is a family of locally rectifiable curves in a domain $\Omega \subseteq \mathbb{C}$, a Borel function $\rho: \Omega \rightarrow [0,\infty]$ is called \textit{admissible} if 
\begin{eqnarray*}
\int_\gamma \rho\; ds \geq 1
\end{eqnarray*}
for every curve $\gamma \in \Gamma$.
The modulus Mod($\Gamma) \in [0, \infty]$ of the curve family $\Gamma$ is then defined as
\begin{eqnarray*}
\inf \int_\Omega \rho^2\; dm,
\end{eqnarray*}
where the infimum is taken over all admissible $\rho$. Here $dm$ denotes integration with respect to Lebesgue measure in the plane. 

An important property of the modulus of curve families is that it is a conformal invariant. That is, when $f: \Omega \rightarrow \Omega'$ is a conformal map between domains and $\Gamma$ is a family of curves in $\Omega$, then 
\begin{eqnarray*}
\text{Mod}(\Gamma) = \text{Mod}(f\Gamma),
\end{eqnarray*}
where $f\Gamma = \{f \circ \gamma : \gamma \in \Gamma \}$. 

It is possible to compute the modulus or at least arrive at a useful estimate for certain families of curves. If $E$ is a Borel set in $\Sn$, $0<r<1$, and $\Gamma$ is the family of radial segments joining $\partial B(0,r)$ to $E$ then 
\begin{eqnarray}
\text{Mod}(\Gamma) = \frac{\sigma(E)}{(\log 1/r)}.
\end{eqnarray}
More generally, the upper bound
\begin{eqnarray}
\text{Mod}(\Gamma) \leq \frac{\sigma(\partial\D)}{\log(R/r)}
\end{eqnarray}
is valid whenever each $\gamma \in \Gamma$ joins $\partial B(x, r)$ to $\partial B(x, R)$, $0 < r < R$. 
These and other basic properties of the modulus can be found in \cite{vaisala}. 

The next lemma is simply a special case of \cite[Lemma 3.2]{conformalmetrics}, and so we omit 
its proof here. 

\begin{lemma}\label{confdensmodest}
There exists a universal constant $C$ with the following property. Let $\Omega$ be a simply connected domain in $\mathbb{C}$ equipped with the intrinsic metric $d_I$, $E$ a non-empty subset of $\Omega$ and suppose $L \geq \delta > 0$. Assume that $\textnormal{diam}_I(E) \leq \delta$ and that $\Gamma$ is a family of curves in $\Omega$ so that each curve $\gamma \in \Gamma$ has one endpoint in $E$ and $\textnormal{length}(\gamma) \geq L$. Then
\begin{align*}
\textnormal{Mod}(\Gamma) \leq \frac{C}{\log (1 + L/\delta)}.
\end{align*}
\end{lemma}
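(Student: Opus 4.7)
The plan is to exhibit an explicit admissible density $\rho$ for the curve family $\Gamma$ and to estimate its $L^{2}$-norm directly. The only geometric input is that $d_I \geq d_{\mathrm{Eucl}}$ on any planar domain, so that intrinsic balls are contained in Euclidean balls of the same radius and hence have area at most $\pi r^{2}$; simple connectedness does not actually play a role. Set $\phi(y) := d_I(y,E)$ and take as candidate
\[
\rho(y) := \frac{1}{\log(1+L/\delta)}\cdot\frac{\chi_{\{\phi(y)\leq L\}}(y)}{\max\{\phi(y),\,\delta\}}.
\]
The function $\phi$ is $1$-Lipschitz in $y$, so $\rho$ is Borel.

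For admissibility, fix $\gamma\in\Gamma$ with endpoint $x_{0}\in E$ and parametrise it by arclength so that $\gamma(0)=x_{0}$. Because $\gamma|_{[0,s]}$ is itself a curve in $\Omega$ from $x_{0}\in E$ to $\gamma(s)$ of length $s$, we obtain $\phi(\gamma(s))\leq d_I(\gamma(s),x_{0})\leq s$; in particular $\phi(\gamma(s))\leq L$ for every $s\in[0,L]\subset[0,\mathrm{length}(\gamma)]$. Therefore
\[
\int_{\gamma}\rho\,ds \;\geq\; \frac{1}{\log(1+L/\delta)}\int_{0}^{L}\frac{ds}{\max\{s,\delta\}} \;=\; \frac{1+\log(L/\delta)}{\log(1+L/\delta)}\;\geq\;1,
\]
the last step being the elementary inequality $1+\log x\geq \log(1+x)$ for $x\geq 1$ (check at $x=1$ and differentiate).

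For the $L^{2}$-estimate, the hypothesis $\mathrm{diam}_I(E)\leq\delta$ together with $d_I\geq d_{\mathrm{Eucl}}$ gives, for any fixed $x_{0}\in E$, the inclusion $\{\phi\leq r\}\subset B_{\mathrm{Eucl}}(x_{0},r+\delta)$; hence the intrinsic $r$-neighbourhood of $E$ has Lebesgue area at most $\pi(r+\delta)^{2}\leq 4\pi r^{2}$ once $r\geq\delta$. Splitting $\{\phi\leq L\}$ into the core $\{\phi\leq\delta\}$ and the dyadic shells $A_{k}=\{2^{k}\delta\leq\phi<2^{k+1}\delta\}$ for $0\leq k\leq\log_{2}(L/\delta)$, the planar area bound yields
\[
\int_{\{\phi\leq L\}}\frac{dm}{\max\{\phi,\delta\}^{2}} \;\lesssim\; 1+\log(L/\delta),
\]
and consequently $\mathrm{Mod}(\Gamma)\leq \|\rho\|_{2}^{2} \lesssim (1+\log(L/\delta))/\log^{2}(1+L/\delta)\lesssim 1/\log(1+L/\delta)$, as required. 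The only delicate point is the admissibility step: it is precisely the hypothesis that $\gamma$ has an endpoint \emph{in $E$} which, after arclength parametrisation from that endpoint, forces $\phi(\gamma(s))\leq s$ and thereby guarantees a logarithmic lower bound for $\int_{\gamma}\rho\,ds$ regardless of how $\gamma$ may spiral or wander in $\Omega$.
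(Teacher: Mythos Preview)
Your argument is correct. The paper itself does not give a proof of this lemma but simply records it as a special case of \cite[Lemma~3.2]{conformalmetrics}, so there is no in-paper argument to compare against; the standard proof of that cited result is, however, exactly the one you write down---take the test density $\rho$ proportional to $1/\max\{d_I(\cdot,E),\delta\}$ on the intrinsic $L$-neighbourhood of $E$, check admissibility via the arclength inequality $d_I(\gamma(s),x_0)\le s$ from the endpoint $x_0\in E$, and bound $\|\rho\|_{2}^{2}$ on dyadic intrinsic annuli using only that $d_I\ge d_{\mathrm{Eucl}}$ so that intrinsic balls have at most Euclidean area. Your side remark that simple connectedness is irrelevant is also accurate; the cited lemma is in fact stated for general domains (indeed for abstract conformal densities), and nothing in your estimate uses the topology of $\Omega$.
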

The next two lemmas are used in Section 4 for our main result. Their proofs are similar, using basic modulus of curve families techniques. 
\begin{lemma}\label{moduluscons}
Let $f$ be a conformal mapping of $\D$ and $M > 1$. There exists an absolute constant $C$ such that
\begin{align*}
\sigma(\{\omega \in S_x : d_I(f(\omega), f(x)) > M d(f(x), \partial f(\D)) \}) \leq C\sigma(S_x)(\log M)^{-1}
\end{align*}
for every $x\in \D$.
\end{lemma}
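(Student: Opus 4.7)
The plan is to apply Lemma~\ref{confdensmodest} to the $f$-image of a family of radial segments in $\D$ ending on $E$, and then use conformal invariance together with the explicit formula $\textnormal{Mod}(\Gamma) = \sigma(E)/\log(1/r)$ for the family of radial segments joining $\partial B(0,r)$ to $E$. With $r = (1+|x|)/2$ one has $\log(1/r) \asymp 1-|x| \asymp \sigma(S_x)$, so an upper bound of order $1/\log M$ on the modulus of such a family will translate directly into the desired inequality $\sigma(E) \lesssim \sigma(S_x)/\log M$.

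Write $d_0 = d(f(x),\partial f(\D))$ and $E = \{\omega \in S_x : d_I(f(\omega),f(x)) > Md_0\}$; the conclusion is trivial when $M$ is bounded above by an absolute constant, so I may assume $M$ is large. Set $r = (1+|x|)/2$ and let $\Gamma_E$ be the family of radial segments $\gamma_\omega = \{t\omega : r \leq t < 1\}$ for $\omega \in E$. The preliminary step is to exhibit a Euclidean disk $D_x$ centered at $x$, of radius comparable to $1-|x|$ and compactly contained in $\D$, such that $r\omega \in D_x$ for every $\omega \in S_x$. Granted this, the Koebe distortion estimates~\eqref{harnack} and~\eqref{internaldiamupperbound} yield $\textnormal{diam}_I(f(D_x)) \leq C_1 d_0$ with $C_1$ absolute, and in particular $d_I(f(x), f(r\omega)) \leq C_1 d_0$ uniformly in $\omega \in S_x$. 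For $\omega \in E$ the triangle inequality then gives
\[
\textnormal{length}(f \circ \gamma_\omega) \geq d_I(f(r\omega), f(\omega)) \geq d_I(f(x), f(\omega)) - C_1 d_0 > (M - C_1)d_0,
\]
and Lemma~\ref{confdensmodest} applied to $f\Gamma_E$ with $\delta = C_1 d_0$ and $L \asymp M d_0$ produces $\textnormal{Mod}(f\Gamma_E) \lesssim 1/\log M$. Conformal invariance and the explicit modulus formula then close the estimate.

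The main technical obstacle is the geometric preliminary of producing $D_x$, which amounts to the inequality $|r\omega - x| \leq c(1-|x|)$ with an absolute $c < 1$ for all $\omega \in S_x$. This reduces to an elementary trigonometric computation using the shadow condition $|\sin\theta| \leq (1-|x|)/(2|x|)$ for $\omega \in S_x$, where $\theta$ is the angle between $\omega$ and $x/|x|$. For $|x|$ close to $1$ (say $|x| \geq 1/2$), expanding $|r\omega - x|^2 = ((1-|x|)/2)^2 + 4r|x|\sin^2(\theta/2)$ and using the shadow condition yields $c$ bounded away from $1$; for $|x|$ bounded, all relevant scales are of constant order and the argument goes through with $r$ replaced by a fixed value in $(0,1)$ and $D_x$ by a fixed compact subdisk of $\D$ containing $x$, with the Harnack-type estimate~\eqref{harnack} again controlling $|f'|$ up to an absolute factor.
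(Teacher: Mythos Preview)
Your proposal is correct and follows essentially the same strategy as the paper: apply Lemma~\ref{confdensmodest} to the $f$-image of a family of radial segments ending on $E$, invoke conformal invariance, and use the explicit modulus formula for radial segments to extract $\sigma(E)$. The only real difference is the choice of the inner radius. The paper takes the inner circle to be $\partial B(0,|x|)$ (and $\partial B(0,1/4)$ in the small-$|x|$ case), so the inner endpoints lie in $B_x\cap \partial B(0,|x|)$ and the estimate~\eqref{internaldiamupperbound} applies directly, with $\log(1/|x|)\approx \sigma(S_x)$ replacing your $\log(1/r)\asymp 1-|x|$. Your choice $r=(1+|x|)/2$ is equally valid but forces you into the extra trigonometric computation showing $|r\omega-x|\leq c(1-|x|)$ with $c<1$; the paper's choice sidesteps this. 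Both arguments handle small $|x|$ by a separate constant-scale case, and both ultimately rely on the same two ingredients: the modulus identity for radial families and Lemma~\ref{confdensmodest}.
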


\begin{proof}
Fix $x\in \D$ and set $d = d(f(x), \partial f(\D))$ and $E = \{\omega \in S_x:  d_I(f(\omega), f(x)) > M d\}$. First suppose $|x| < 1/4$ and let $\Gamma_E$ be the set of radial segments that have one endpoint in $E$ and the other in $B_x \cap \partial B(0,1/4)$. By (\ref{internaldiamupperbound}) and the definition of $E$ each curve in $f(\Gamma_E)$ has one endpoint in $B_I(f(x), Cd)$, where $C$ is absolute, and the other other endpoint in $\mathbb{C} \setminus B_I(f(x), M d)$. Assume $2 \leq C$ and $C^2 < M$. Then by basic modulus estimates and Lemma \ref{confdensmodest} we have
\begin{align*}
\sigma(E)(\log4)^{-1} = \textnormal{Mod}(\Gamma_E) = \textnormal{Mod}(f(\Gamma_E)) \leq C(\log M)^{-1} \leq C\sigma(\Sn)(\log M)^{-1}. 
\end{align*}
If $1 < M \leq C^2$ then trivially
\begin{align*}
\sigma(E) \leq \sigma(\Sn)(\log C^2)(\log M)^{-1}.
\end{align*}

If $|x| \geq 1/4$ then we choose $\Gamma_E$ to be the family of radial segments with one endpoint in $E$ and the other in $B_x \cap \partial B(0,|x|)$. Proceeding like before, the case $1 < M \leq C^2$ is trivial and if $C^2 < M$ then
\begin{align*}
\sigma(E)\log(1/|x|)^{-1} = \textnormal{Mod}(\Gamma_E) = \textnormal{Mod}(f(\Gamma_E)) \leq C (\log M)^{-1}.
\end{align*}
Noting that in this case $\log(1/|x|) \approx \sigma (S_x)$, we are done.
\end{proof}
\begin{lemma}\label{lemma42restatement}
Let $f:\D \rightarrow \mathbb{C}$ be conformal map such that $f(x) \neq 0$ for all $x\in \D$, $\phi$ a growth function and $\delta >0$. There is an absolute constant $C$ such that for each $x \in \D$ and $M > 1,$
\begin{eqnarray*}
\sigma(\{\omega \in S_x : \phi(\delta|f(\omega)|) < \phi(\delta|f(x)|/M)\}) \leq C\sigma(S_x)(\log M)^{-1}.
\end{eqnarray*}
\end{lemma}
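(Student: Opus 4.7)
The plan is to reduce to Lemma \ref{moduluscons} by introducing the auxiliary conformal map $g:\D\to\C$ defined by $g(z) = f(x)/f(z)$. Since $f$ is conformal and nowhere zero on $\D$, $g$ is the composition of $f$ with the M\"obius transformation $w\mapsto f(x)/w$, and is therefore itself a conformal map of $\D$ onto a simply connected domain contained in $\C\setminus\{0\}$, with $g(x) = 1$.

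Since $\phi$ is strictly increasing, the inequality $\phi(\delta|f(\omega)|) < \phi(\delta|f(x)|/M)$ is equivalent to $|f(\omega)| < |f(x)|/M$, and hence to $|g(\omega)| > M$. So it suffices to prove
$$\sigma(\{\omega \in S_x : |g(\omega)| > M\}) \leq C \sigma(S_x)(\log M)^{-1}.$$

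The key observation is that $g(x)=1$ and $0 \in \C \setminus g(\D)$, which forces $d(g(x), \partial g(\D)) \leq 1$. If $|g(\omega)| > M$, the reverse triangle inequality combined with the fact that the intrinsic distance in $g(\D)$ dominates the Euclidean distance gives
$$d_I(g(\omega), g(x)) \geq |g(\omega) - g(x)| > M - 1.$$
For $M \geq 3$ this yields $d_I(g(\omega), g(x)) > M/2 \geq (M/2)\, d(g(x), \partial g(\D))$, so applying Lemma \ref{moduluscons} to $g$ with parameter $M/2$ produces the bound $\sigma(\{\omega \in S_x : |g(\omega)| > M\}) \leq C\sigma(S_x)(\log(M/2))^{-1}$, which is $\leq C'\sigma(S_x)(\log M)^{-1}$ after adjusting constants. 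For $1 < M < 3$ the conclusion is trivial, since $(\log M)^{-1} \geq (\log 3)^{-1}$ and the set has measure at most $\sigma(S_x)$.

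The main obstacle I expect is noticing that the direct modulus-of-curve-families argument used to prove Lemma \ref{moduluscons} does not transfer to the present Euclidean setting: the set $f(B_x)$ need not lie in an annulus whose inner radius is comparable to $|f(x)|$, since $d(f(x), \partial f(\D))$ can be as large as $|f(x)|$ itself, so curves from $f(B_x)$ to a point with $|f(\omega)| < |f(x)|/M$ need not cross many concentric annuli centered at $0$. The passage to $g$ fixes this: the normalization $g(x) = 1$ together with $0 \in \C \setminus g(\D)$ caps $d(g(x), \partial g(\D))$ at $1$, providing the scale at which Lemma \ref{moduluscons} extracts a logarithmic gain in $M$.
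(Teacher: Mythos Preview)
Your argument is correct. The reduction to Lemma~\ref{moduluscons} via the auxiliary map $g = f(x)/f$ is clean: $g$ is conformal on $\D$, $g(x)=1$, $0\notin g(\D)$ forces $d(g(x),\partial g(\D))\le 1$, and $|f(\omega)|<|f(x)|/M$ becomes $|g(\omega)|>M$, whence $d_I(g(\omega),g(x))\ge |g(\omega)-g(x)|>M-1\ge (M-1)\,d(g(x),\partial g(\D))$. Lemma~\ref{moduluscons} then gives the bound for $M>2$, and small $M$ is trivial.

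This is a genuinely different route from the paper, which does not reduce to Lemma~\ref{moduluscons} but instead reruns the modulus-of-curve-families argument directly for $f$, using Euclidean annuli in place of the intrinsic balls of Lemma~\ref{confdensmodest}. Your final paragraph overstates the difficulty of that direct approach: since $0\notin f(\D)$ one has $d(f(x),\partial f(\D))\le |f(x)|$, and Koebe distortion (applied to $1/f$ as well as to $f$) yields the two-sided bound $|f(y)|\asymp |f(x)|$ for $y\in B_x$ with an absolute constant. Hence $f(B_x)$ does lie in an annulus about the origin of bounded ratio, and the radial curves in $f(\Gamma_E)$ must cross the annulus $\{|f(x)|/M<|w|<|f(x)|/C\}$, giving the $(\log M)^{-1}$ modulus bound directly. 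What your reduction buys is that this two-sided Koebe estimate is hidden inside the normalization $g(x)=1$, $0\notin g(\D)$, so you can quote Lemma~\ref{moduluscons} as a black box rather than reproving it; the paper's approach is more self-contained but has to redo the modulus computation.
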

\begin{proof}
Let $x \in \D$ and $\omega \in S_x$. Since $\phi$ is strictly increasing, $\phi(\delta|f(\omega)|) < \phi(\delta|f(x)|/M)$ if and only if $|f(\omega)| < |f(x)|/M$. So it suffices to prove the inequality for the case that $\delta = 1$ and $\phi$ is the identity map on $[0, \infty]$. 

Set $E = \{\omega \in S_x :  |f(\omega)| < |f(x)|/M\}$, and choose the curve families $\Gamma_E$ like in the proof of Lemma~\ref{moduluscons}. Note that each curve in $f(\Gamma_E)$ will have one endpoint belonging to $B(f(x), C|f(x)|)$ and the other endpoint in $\mathbb{C} \setminus B(f(x), |f(x)|/M)$, with $C$ an absolute constant. The desired estimate then follows using the same properties of modulus of curve families as in the proof of Lemma~\ref{moduluscons}. 
\end{proof}
We end this section with the following result due to Gehring and Hayman (\cite{GH}). Their theorem says that if ${f: \mathbb{D} \rightarrow f(\D)}$ is conformal then the images of the hyperbolic geodesics in $\mathbb{D}$ are essentially the shortest curves in $f(\D).$ Since the hyperbolic geodesic between $0$ and $x\in\mathbb{D}$ is the radial segment $[0,x]$, we state the following version of their theorem. 
\begin{ghthm}\label{ghthm}
There is a universal constant $C$ with the following property. Suppose $f:\mathbb{D} \rightarrow \mathbb{C}$ is conformal and $\gamma$ is a curve in $\mathbb{D}$ with endpoints 0 and $x \in \mathbb{D}$. Then 
\begin{eqnarray*}
\textnormal{length}(f([0, x])) \leq C\textnormal{length}(f(\gamma)).
\end{eqnarray*} 
\end{ghthm}

\section{$H^\psi_I = H^\psi$ when $\psi$ is doubling}

In this section we prove our main theorem, showing that if $\psi$ is a doubling growth function then $H^\psi$ and $H^\psi_I$ contain the same conformal mappings. The main work is done in Lemma~\ref{mainlemma}, but first we need the following results involving the nontangential maximal functions $f^*$ and $f_I^*$. We handle the classical setting first. 
\begin{lemma}\label{fstarclassical}
Let $\psi$ be a growth function and $f: \D \rightarrow \mathbb{C}$ a conformal mapping. If there exists $\delta > 0$ such that  
\begin{eqnarray*}
\int_{\Sn} \psi (\delta |f(\omega)|) d \sigma < \infty
\end{eqnarray*}
then there is $\epsilon(\delta)> 0$ such that
\begin{eqnarray*}
\int_{\Sn} \psi (\epsilon f^*(\omega)) d \sigma < \infty.
\end{eqnarray*}
\end{lemma}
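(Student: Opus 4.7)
The plan is to prove a weak-type inequality of the form $\sigma(\{f^* > \lambda\}) \leq C\,\sigma(\{|f| > \lambda/M_0\})$ for absolute constants $C$ and $M_0 > 1$, and then convert it into the Orlicz bound by a layer-cake integration. Since Lemma~\ref{lemma42restatement} is stated under the hypothesis that $f$ is non-vanishing on $\D$, I would first reduce to that case: because $f(\D) \subsetneq \C$ (otherwise $f^{-1}$ would be bounded and entire), one picks $a \notin \overline{f(\D)}$ and sets $g = f-a$. The elementary inequality $\psi((u+v)/2) \leq \psi(u)+\psi(v)$ for $u,v \geq 0$ (immediate by splitting into the cases $u \geq v$ and $u < v$) yields $\int_{\Sn}\psi((\delta/2)|g|)\,d\sigma \leq \int_{\Sn}\psi(\delta|f|)\,d\sigma + 2\pi\psi(\delta|a|) < \infty$, so $g$ inherits the boundary hypothesis; the same inequality will let me pass back from $g^*$ to $f^*$ at the end via $f^* \leq g^* + |a|$.

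Now assume $f$ is non-vanishing. Fix $\lambda > 0$, take $\omega$ in the open set $E_\lambda := \{f^* > \lambda\}$, and choose $x = x(\omega) \in \Gamma(\omega)$ with $|f(x)| > \lambda$. Lemma~\ref{lemma42restatement} with $\phi$ the identity and $\delta = 1$ provides an absolute $M_0 > 1$ such that
\begin{equation*}
\sigma\bigl(\{\omega' \in S_x : |f(\omega')| \geq |f(x)|/M_0\}\bigr) \geq \tfrac12 \sigma(S_x).
\end{equation*}
An elementary geometric check shows that $\omega$ lies in a fixed dilate $cS_x$ of the shadow, since $x \in \Gamma(\omega)$ forces the angular distance from $\omega$ to $x/|x|$ to be $O(1-|x|)$. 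The family $\{cS_{x(\omega)}\}_{\omega \in E_\lambda}$ then covers $E_\lambda$, and the standard $5r$-covering lemma on $\Sn$ extracts a disjoint subfamily $\{cS_{x_i}\}$ whose further enlargements $\{5cS_{x_i}\}$ still cover $E_\lambda$. Summing, using the half-measure property, and using the disjointness of the arcs $S_{x_i}$ gives
\begin{equation*}
\sigma(E_\lambda) \leq 5c\sum_i \sigma(S_{x_i}) \leq 10c\sum_i \sigma\bigl(S_{x_i} \cap \{|f| > \lambda/M_0\}\bigr) \leq 10c\,\sigma\bigl(\{|f| > \lambda/M_0\}\bigr),
\end{equation*}
which is the desired weak-type inequality.

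With this in hand, the layer-cake identity together with the substitution $t = \epsilon M_0 s$ produces
\begin{equation*}
\int_{\Sn}\psi(\epsilon f^*)\,d\sigma = \int_0^\infty \psi'(t)\sigma(\{f^* > t/\epsilon\})\,dt \leq C\int_{\Sn}\psi(\epsilon M_0|f|)\,d\sigma,
\end{equation*}
so choosing $\epsilon$ so that $\epsilon M_0$ matches the $\delta$ of the hypothesis (after the shift adjustment) makes the right-hand side finite, and unwinding the shift finishes the proof. The main obstacle will be the covering step: one must verify that the shadows behave as balls in the doubling metric space $\Sn$ and track the dilation constant $c$ that arises because $\omega$ sits in $cS_{x(\omega)}$ rather than in $S_{x(\omega)}$ itself. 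A secondary point worth watching is that no doubling hypothesis on $\psi$ enters anywhere, which matters because the lemma is stated for arbitrary growth functions; the argument succeeds precisely because the weak-type bound is a genuine level-set comparison that plays cleanly with the change of variables $t = \epsilon M_0 s$.
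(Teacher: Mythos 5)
Your proof is correct, and it follows a genuinely different route than the paper's. Both arguments feed off the same input: Lemma~\ref{lemma42restatement} with $\phi$ the identity shows that for a fixed absolute $M_0 > 1$, the set $\{\omega' \in S_x : |f(\omega')| \geq |f(x)|/M_0\}$ occupies at least half of $S_x$. From there the two proofs diverge. The paper sets $\phi = \psi^{1/2}$, averages the half-measure estimate over $S_x$ to obtain the pointwise bound $\phi(\delta f^*(\omega)/C) \leq 2\,M\phi(\delta|f|)(\omega)$ in terms of the uncentered Hardy--Littlewood maximal operator $M$, and then squares and invokes the $L^2$-boundedness of $M$. You instead prove a genuine weak-type level-set comparison $\sigma(\{f^* > \lambda\}) \leq C\,\sigma(\{|f| > \lambda/M_0\})$ directly, via the $5r$-covering lemma applied to the arcs $cS_{x(\omega)}$ (using that $x \in \Gamma(\omega)$ forces $\omega$ into a fixed dilate of $S_x$, with the convention $S_x = \Sn$ when $|x|$ is small), and then pass to the Orlicz bound by the layer-cake formula with the substitution $s \mapsto \psi(t)$. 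Your route is more elementary in that it avoids both the $\phi = \psi^{1/2}$ device and the $L^2$-maximal-function theorem (which itself rests on a covering argument plus interpolation), effectively unwinding that machinery into a single transparent Vitali step; it also yields a clean distributional inequality that explains at once why no doubling or convexity assumption on $\psi$ is needed. The paper's argument, while more technology-laden, has the slight advantage of producing a pointwise domination of $f^*$ by a maximal function, which can be recycled in other contexts. One small point to flesh out if you were to write this in full: the verification that $\omega \in cS_{x(\omega)}$ for a uniform $c$ requires splitting into the regimes $|x|$ near $1$ (where both the angular offset and $\sigma(S_x)$ are comparable to $1-|x|$) and $|x|$ bounded away from $1$ (where $\sigma(S_x)$ is bounded below), but this is a routine Whitney-type estimate.
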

\begin{proof}
First assume that $f(x) \neq 0$ for all $x\in \D$ and let $\phi = \psi^{1/2}$. By Lemma \ref{lemma42restatement}, there exists an absolute constant $C$ such that
\begin{align*}
\sigma(\{\omega \in S_x : \phi(\delta|f(\omega)|) \geq \phi(\delta|f(x)|/C)\}) \geq \sigma(S_x)/2
\end{align*}
for every $x\in \D$. Then given any $x\in \D$ we have
\begin{eqnarray*}
\int_{S_x} \phi (\delta |f(\omega)|) d\sigma &\geq& \phi(\delta|f(x)|/C) \sigma(\{\omega\in S_x: \phi(\delta |f(\omega)|)\geq \phi(\delta|f(x)|/C)  \})\\ &\geq& \phi(\delta|f(x)|/C) \frac{\sigma(S_x)}2.
\end{eqnarray*}
Let $M$ denote the non-centered Hardy-Littlewood maximal function on $\Sn$. By the previous inequality 
\begin{align*}
\phi(\delta f^*(\omega)/C) \leq 2 M\phi (\delta |f|)(\omega),
\end{align*}
and so recalling that $M$ is a bounded operator on $L^2(\Sn)$ (c.f. \cite{garnett}) we now have
\begin{eqnarray*}
 \int_{\Sn} \psi(\delta f^*(\omega)/C) d\sigma = \int_{\Sn} \phi^2(\delta f^*(\omega)/C) d\sigma \leq 4\int_{\Sn} M^2\phi(\delta |f|)(\omega)d\sigma \\\leq C_1\int_{\Sn} \phi^2(\delta |f(\omega)|) d\sigma = C_1\int_{\Sn} \psi(\delta |f(\omega)|) d\sigma < \infty.
\end{eqnarray*}
This completes the proof for the case that $f(x) \neq 0$ on $\D$. The other case is handled easily by applying the above result to $g(x) = f(x) - y$, where $y$ is some fixed point in $\mathbb{C} \setminus f(\D)$. 
\end{proof}
The intrinsic version of the lemma can be stated in a nicer form than in the classical setting, since there is no need to consider separate cases.
\begin{lemma}\label{ghapp}
There exists a universal constant $C$ such that if $\psi$ is a growth function, $f$ is a conformal mapping of $\D$ and $\delta > 0$ then
\begin{eqnarray*}
\int_{\Sn}\psi(\frac{\delta}{C}f_I^*(\omega))d\sigma \leq \int_{\Sn}\psi(\delta|f(\omega)|_I)d\sigma, 
\end{eqnarray*}
\end{lemma}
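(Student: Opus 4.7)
The plan is to establish the pointwise estimate $f_I^*(\omega) \leq C|f(\omega)|_I$ for almost every $\omega \in \Sn$, with a universal constant $C$. Since $\psi$ is increasing, this immediately gives $\psi\!\left(\tfrac{\delta}{C}f_I^*(\omega)\right) \leq \psi(\delta|f(\omega)|_I)$ pointwise, and integrating over $\Sn$ yields the asserted inequality.

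To obtain the pointwise bound, fix $\omega \in \Sn$ for which the radial limit $f(\omega)$ exists, lies in $\partial f(\D)$, and satisfies $\text{length}(f([0,\omega])) < \infty$ --- a full-measure subset of $\Sn$. Given $x \in \Gamma(\omega)$, choose $t \in [0,1)$ with $x \in B_{t\omega}$. The triangle inequality together with (\ref{internaldiamupperbound}) gives
\[
|f(x)|_I \leq \text{length}(f([0,t\omega])) + \text{diam}_I(f(B_{t\omega})) \leq \text{length}(f([0,t\omega])) + C\,d(f(t\omega),\partial f(\D)).
\]
Since $f(\omega) \in \partial f(\D)$, we have $d(f(t\omega),\partial f(\D)) \leq |f(t\omega)-f(\omega)| \leq \text{length}(f([t\omega,\omega]))$, so the two terms combine to yield $|f(x)|_I \leq C\,\text{length}(f([0,\omega]))$ for every $x \in \Gamma(\omega)$.

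It remains to bound $\text{length}(f([0,\omega]))$ by $|f(\omega)|_I$ via a boundary version of the Gehring-Hayman theorem. For any rectifiable curve $\gamma$ in $\D$ from $0$ to $\omega$, truncate at $y_n = \gamma(t_n)$ with $t_n \uparrow 1$; Gehring-Hayman applied at the interior point $y_n$ gives $\text{length}(f([0,y_n])) \leq C\,\text{length}(f\circ\gamma)$. Writing $\text{length}(f([0,y_n])) = \int_0^1 |f'(sy_n)|\,|y_n|\,ds$ and applying Fatou's lemma as $n \to \infty$ --- using the continuity of $|f'|$ on $\D$ --- we conclude $\text{length}(f([0,\omega])) \leq C\,\text{length}(f\circ\gamma)$, and infimizing over $\gamma$ gives $\text{length}(f([0,\omega])) \leq C|f(\omega)|_I$. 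Assembling, $|f(x)|_I \leq C'|f(\omega)|_I$ for all $x\in\Gamma(\omega)$, hence $f_I^*(\omega)\leq C'|f(\omega)|_I$ a.e. The main subtlety is this boundary passage of Gehring-Hayman; everything else is a routine chain of Whitney-disk and triangle-inequality estimates.
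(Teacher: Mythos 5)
Your proof is correct and follows essentially the same route as the paper: the triangle inequality through the point $t\omega$, the Whitney-disk diameter estimate (3.3), and the Gehring--Hayman theorem applied up to the boundary to compare $\textnormal{length}(f([0,\omega)))$ with $|f(\omega)|_I$. You are more explicit than the paper's one-line chain in two places --- you bound $d(f(t\omega),\partial f(\D))$ directly by $|f(t\omega)-f(\omega)|$ rather than via the Harnack-type estimate (3.2), and you make the boundary passage of Gehring--Hayman rigorous with a truncation-and-Fatou argument that the paper leaves implicit.
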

\begin{proof}

The Gehring-Hayman theorem and (\ref{harnack}) imply that there is a universal constant $C$ such that
\begin{eqnarray*}
|f(x)|_I \leq |f(t\omega)|_I + d_I(f(t\omega), f(x)) \leq C\textnormal{length}(f([0, \omega))) \leq C|f(\omega)|_I
\end{eqnarray*}
for every $\omega \in \Sn$ and all $x\in \Gamma(\omega)$. The desired inequality follows. 
\end{proof}
Given a growth function $\psi$, if there exists a constant $C$ such that $\psi (2t) \leq C\psi(t)$ for all $t\in[0, \infty]$ then $\psi$ is called \textit{doubling}. We refer to the infimum of all such $C$ as the doubling constant of $\psi$.
\begin{lemma}\label{mainlemma}
Let $\psi$ be a doubling growth function and $f$ a conformal mapping of $\D$. If there is a map $v:\Sn \rightarrow [0, \infty]$ such that $\psi(v(\omega)) \in L^1(\Sn)$  and
\begin{eqnarray*}
\sup_{x\in \Gamma(\omega)} d(f(x), \partial f(\D)) \leq C v(\omega)
\end{eqnarray*}
for some constant $C$ and almost every $\omega \in \Sn$ then $f \in H_I^\psi$.
\end{lemma}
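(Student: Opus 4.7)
The plan is to establish $\int_{\Sn}\psi(\delta f_I^*)\,d\sigma<\infty$ for some $\delta>0$: since $|f(r\omega)|_I\leq f_I^*(\omega)$ for every $r\in(0,1)$, this is sufficient for $f\in H_I^\psi$. The overall approach mirrors Lemma~\ref{fstarclassical}, with Lemma~\ref{moduluscons} taking the role played by Lemma~\ref{lemma42restatement}, while the hypothesis on $v$ absorbs the additive error introduced by working with $d_I$ in place of the Euclidean modulus.

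First I would fix an $M$ large enough that Lemma~\ref{moduluscons} forces $\sigma(E_x)\geq\sigma(S_x)/2$ for every $x\in\D$, where $E_x=\{\omega'\in S_x:d_I(f(\omega'),f(x))\leq M\,d(f(x),\partial f(\D))\}$. For $\omega'\in E_x$ such that $x\in\Gamma(\omega')$ (a full-measure subset of $E_x$ by the Whitney geometry of $S_x$), the hypothesis on $v$ and the triangle inequality give the pointwise bound
\[
|f(x)|_I\;\leq\;|f(\omega')|_I+MC\,v(\omega').
\]
Setting $\phi=\psi^{1/2}$, using $|f(x)|_I/2\leq\max(|f(\omega')|_I,MCv(\omega'))$ and monotonicity of $\phi$, averaging over $S_x$, and invoking the Hardy-Littlewood maximal operator $M_{HL}$ on $\Sn$, I would obtain
\[
\phi(\delta|f(x)|_I/2)\;\leq\;2M_{HL}\bigl(\phi(\delta|f(\cdot)|_I)\bigr)(\omega)+2M_{HL}\bigl(\phi(\delta MCv)\bigr)(\omega)
\]
for any $\omega$ with $x\in\Gamma(\omega)$. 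Taking a supremum over such $x$, squaring, integrating, and applying the $L^2$-boundedness of $M_{HL}$ would deliver
\[
\int_{\Sn}\psi(\delta f_I^*/2)\,d\sigma\;\leq\;C_1\int_{\Sn}\psi(\delta|f(\omega)|_I)\,d\sigma+C_2\int_{\Sn}\psi(\delta MCv)\,d\sigma,
\]
with absolute $C_1,C_2$. The second integral on the right is finite by the hypothesis on $v$ together with doubling of $\psi$.

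The hard part will be absorbing the first integral on the right: the bound $|f(\omega)|_I\leq f_I^*(\omega)$ together with doubling of $\psi$ makes the display formally self-referential. My plan to break the circularity is to run the whole argument first on the truncated maximal function $f_I^{*,r}(\omega):=\sup_{x\in\Gamma(\omega),\,|x|\leq r}|f(x)|_I$ for each $r\in(0,1)$. Because $f(\overline{B(0,r)})$ is a compact subset of $f(\D)$, $f_I^{*,r}$ is a priori bounded on $\Sn$, so every quantity appearing in the chain is finite and the absorption closes with explicit constants (with Lemma~\ref{ghapp} used to pin $|f(\omega)|_I$ back to $f_I^*$ in $L^\psi$). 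The resulting uniform-in-$r$ bound, combined with monotone convergence as $r\uparrow 1$ and doubling of $\psi$ to rescale $\delta$ if necessary, will then yield $\int_{\Sn}\psi(\delta f_I^*)\,d\sigma<\infty$.
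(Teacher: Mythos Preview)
Your maximal-function argument produces
\[
\int_{\Sn}\psi(\delta f_I^*/2)\,d\sigma \;\leq\; C_1\int_{\Sn}\psi(\delta|f(\omega)|_I)\,d\sigma \;+\; C_2\int_{\Sn}\psi(\delta MCv)\,d\sigma,
\]
but the absorption cannot close: $C_1$ is an absolute constant bounded below by $\|M_{HL}\|_{L^2\to L^2}^{2}>1$, with further fixed factors from $(a+b)^2\le 2a^2+2b^2$ and from $\sigma(E_x)\ge\sigma(S_x)/2$. Since $|f(\omega)|_I\le f_I^*(\omega)$ a.e.\ and $\psi$ is doubling, the first term on the right dominates a fixed multiple $>1$ of the left-hand side. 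Passing to the truncated $f_I^{*,r}$ only turns the display into a finite tautology $A\le C_1'A+B$ with $C_1'>1$; nothing can be subtracted. The analogy with Lemma~\ref{fstarclassical} breaks exactly here: there the integral $\int\psi(\delta|f|)\,d\sigma$ on the right is the \emph{hypothesis}, whereas $\int\psi(\delta|f|_I)\,d\sigma$ is the unknown. Taking $M$ larger in your scheme only inflates the $v$-term; it drives $\sigma(S_x)/\sigma(E_x)\to 1$ but cannot push $C_1$ below $1$.

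The paper obtains the missing small parameter by a good-$\lambda$ inequality instead of a pointwise maximal bound. One Whitney-decomposes the super-level set $U(\lambda)=\{f_I^*>\lambda\}$ into caps $S_{x_j}$; since each cap sits within $\approx(1-|x_j|)$ of $\Sn\setminus U(\lambda)$, the hypothesis on $v$ gives $|f(x_j)|_I\le\lambda+C\gamma$. Lemma~\ref{moduluscons} then shows that only a $C(\log M)^{-1}$-fraction of each $S_{x_j}$ can have $|f(\omega)|_I>2\lambda$ with $v(\omega)\le\gamma$. Integrating against $\psi'(\lambda)$ yields
\[
\int_{\Sn}\psi(\tfrac12|f|_I)\,d\sigma \;\le\; \frac{C}{\log M}\int_{\Sn}\psi(f_I^*)\,d\sigma \;+\; C(M,C_\psi)\int_{\Sn}\psi(v)\,d\sigma,
\]
and now the coefficient $C/\log M$ is at your disposal. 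Applying this to $f_t(x)=f(tx)$ for a priori finiteness, invoking Lemma~\ref{ghapp} (with doubling) to replace $\int\psi(f_{t,I}^*)$ by $\int\psi(\tfrac12|f_t|_I)$, and then choosing $M$ large enough absorbs the self-referential term.
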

\begin{proof}
Set $U(\lambda) = \{\omega \in \Sn : f_I^*(\omega) > \lambda \}$. We can use the generalized form of the Whitney decomposition \cite[Theorem III.1.3]{whitneyref} to write $U(\lambda)$ as the union of caps
\begin{eqnarray*}
U(\lambda) = \cup S_{x_j},
\end{eqnarray*}
where the caps have uniformly bounded overlap and there is an absolute constant $C$ such that for all $j$
\begin{align}\label{whitneyineq}
\frac{(1 - |x_j|)}{C} \leq d(S_{x_j}, \partial U(\lambda)) \leq C(1 - |x_j|).
\end{align} 

Suppose $\omega \in S_{x_j}$ and $v(\omega) \leq \gamma$. Then the initial assumption along with inequalities (\ref{internaldiamupperbound}) and (\ref{whitneyineq}) imply that there is $\omega' \in \Sn \setminus U(\lambda)$ and a corresponding $x_j' \in \Gamma(\omega')$ such that
\begin{eqnarray}\label{goodlambda}
|f(x_j)|_I \leq d_I(f(x_j), f(x_j')) + |f(x_j')|_I \leq C\gamma + \lambda.
\end{eqnarray}

Let $M > 1$, $\gamma = \frac{\lambda}{(M+1)C}$ and suppose $\omega \in S_{x_j}$ with $v(\omega) \leq \gamma$ and $|f(\omega)|_I > 2\lambda$. Then (\ref{goodlambda}) gives
\begin{eqnarray*}
d_I(f(\omega), f(x_j)) \geq |f(\omega)|_I - |f(x_j)|_I > MC\gamma \geq M d(f(x_j), \partial f(\D)),
\end{eqnarray*}
and so by Lemma \ref{moduluscons}
\begin{gather*}
\sigma(\{\omega \in S_{x_j} : |f(\omega)|_I > 2\lambda\;\; \text{and}\;\; v(\omega) \leq \gamma \})
\\ \leq \sigma(\{\omega \in S_{x_j}: d_I(f(\omega), f(x_j)) > M d(f(x_j), \partial f(\D)) \})
\\ \leq C\sigma(S_{x_j})(\log M)^{-1},
\end{gather*}
where $C$ is an absolute constant. 

If $|f(\omega)|_I > 2\lambda$ then also $f_I^*(\omega) > \lambda$, and so we can apply the above to conclude that
\begin{gather*}
\sigma(\{\omega \in \Sn :  |f(\omega)|_I > 2\lambda\})
\\ \leq \sigma (\{\omega \in U(\lambda) :   |f(\omega)|_I > 2\lambda \;\; \text{and}\;\; v(\omega) \leq \gamma \}) + \sigma(\{\omega \in \Sn : v(\omega) > \gamma \})
\\ \leq C\sum_j \sigma (S_{x_j}) (\log M)^{-1} + \sigma (\{\omega \in \Sn: v(\omega) > \gamma \}) \\ \leq C\sigma (U(\lambda))(\log M)^{-1} + \sigma (\{\omega \in \Sn: v(\omega) > \gamma \}).
\end{gather*}

Thus
\begin{eqnarray*}
\int_{\Sn} \psi(\frac{1}{2}|f(\omega)|_I) d\sigma = \int_0^\infty \psi'(\lambda) \sigma(\{\omega \in \Sn: |f(\omega)|_I > 2\lambda \})d\lambda
\\ \leq \int_0^\infty \psi'(\lambda) (C\sigma (U(\lambda))(\log M)^{-1} + \sigma (\{\omega \in \Sn: v(\omega) > \frac{\lambda}{(M+1)C} \}))d\lambda \\ = C(\log M)^{-1}\int_{\Sn}\psi(f_I^*(\omega))d\sigma + \int_{\Sn} \psi((M+1)C v(\omega)) d\sigma \\ \leq C(\log M)^{-1}\int_{\Sn}\psi(f_I^*(\omega))d\sigma  + C(M, C_\psi)\int_{\Sn} \psi(v(\omega))d\sigma,
\end{eqnarray*}
where $C_\psi$ denotes the doubling constant of $\psi$.

To finish the proof note that we can apply the above to the functions $f_t(x) = f(tx)$ for each $0 < t < 1$: 
\begin{eqnarray*}
\int_{\Sn} \psi(\frac{1}{2}|f_t(\omega)|_I) d\sigma \leq \frac{C}{\log M}\int_{\Sn}\psi(f_{t_I}^*(\omega))d\sigma + C(M, C_\psi)\int_{\Sn} \psi(v(\omega))d\sigma.
\end{eqnarray*}
Then by Lemma \ref{ghapp} there is a new absolute constant $C = C(C_{\psi})$ such that 
\begin{eqnarray*}
\int_{\Sn} \psi(\frac{1}{2}|f_t(\omega)|_I) d\sigma \leq \frac{C}{\log M}\int_{\Sn}\psi(\frac{1}{2}|f_t(\omega)|_I)d\sigma  + C(M, C_\psi)\int_{\Sn} \psi(v(\omega))d\sigma.
\end{eqnarray*}
By choosing $M$ sufficiently large and combining terms we obtain that
\begin{eqnarray*}
\int_{\Sn} \psi(|f(t\omega)|_I) d\sigma \leq C\int_{\Sn} \psi(v(\omega))d\sigma
\end{eqnarray*}
for each $0 < t < 1$, which completes the proof. 
\end{proof}

The proof of Theorem \ref{mainthm} now follows easily. 

\begin{proof}[Proof of Theorem \ref{mainthm}]
If $f\in H^\psi$ then $\psi(f(\omega)), \psi(f^*(\omega))  \in L^1(\Sn)$ by Fatou's Lemma and Lemma \ref{fstarclassical}. If $0 \in \mathbb{C} \setminus f(\D)$ then 
\begin{align*}
\sup_{x\in \Gamma(\omega)}d(f(x), \partial f(\D) \leq f^*(\omega)
\end{align*}
for every $\omega\in \Sn$, and thus $f\in H^\psi_I$ by Lemma \ref{mainlemma}. The case when $0 \in f(\D)$ follows by first applying the result to $g(x) = f(x) - y$, where $y$ is some fixed point in $\mathbb{C} \setminus f(\D)$. 
\end{proof}
\begin{proof}[Proof of Corollary \ref{mainthmcor}]
If $f\in H^\psi$ then $f\in H^\psi_I$ by Theorem \ref{mainthm}, and then by Fatou's Lemma
\begin{eqnarray}\label{intrinsicLpsi}
\int_{\Sn}\psi(|f(\omega)|_I)d\sigma < \infty.
\end{eqnarray}
The Gehring-Hayman Theorem now implies that 
\begin{eqnarray}\label{fprimeint}
\int_{\Sn} \psi \left(\int_0^1 |f'(r\omega)| dr \right) d\sigma < \infty.
\end{eqnarray}
Conversely if (\ref{fprimeint}) holds, then clearly (\ref{intrinsicLpsi}) holds by definition. It follows by Lemma \ref{fstarclassical} that $f\in H^\psi$. 
\end{proof}

\section{Counterexample for non-doubling $\psi$}
In this section we sketch an example to show that the statement of Theorem \ref{mainthm} does not necessarily hold for non-doubling growth functions. The general idea in our example is to use the (quasi-)hyperbolic metric and its (quasi-)invariance to calculate the difference in the growth orders between the maximum modulus
\begin{align*}
M(r,f) = \sup_{|x| = r} |f(x)|
\end{align*}
and the intrinsic maximum modulus
\begin{align*}
M_I(r,f) = \sup_{|x| = r} |f(x)|_I,
\end{align*}
where $f$ maps onto a domain that spirals in a way to maximize $M_I(r,f)$. Then, using the maximum moduli we can show that $f\in H^\psi$ but $f \notin H^\psi_I$ for an appropriately chosen growth function $\psi$.

Recall that the quasi-hyperbolic metric on a domain $\Omega$ is defined as
\begin{eqnarray*}
k(u, v) = \inf \int_\gamma \frac{1}{d(z, \partial \Omega)} |dz|,\;\; u, v\in \Omega,
\end{eqnarray*}
where the infimum is taken over all curves $\gamma \subset \Omega$ with endpoints $u, v$. This metric is quasi-invariant with the hyperbolic metric, and so whenever $f$ is a conformal mapping of $\D$ onto $\Omega$ we have that
\begin{eqnarray}\label{hypmetricinv}
\log\left(\frac{1}{1- |f^{-1}(u)|}\right) \approx k(0, u).
\end{eqnarray}

Let $\alpha \geq 0$ be fixed and $g(t) = t^{\alpha+1}e^{2\pi i t}, t \geq 0,$ be the planar spiral curve whose distance between its $j-1$ and $j$ loops is comparable to $j^\alpha$. Let $\Omega$ be the simply connected domain having $g(t)$ as its boundary, and let $f$ map $\D$ conformally onto $\Omega$, with $f(0)$ in the center of the spiral. Label as $c_j, j \in \mathbb{N},$ the center points of each complete jth loop in $\Omega$, and choose $z_j\in \D$ so that $f(z_j) = c_j$. See Figure 1.
\begin{figure}

\begin{tikzpicture}
\begin{axis}
[axis x line*=center,
    axis y line*=center,xtick=\empty, ytick=\empty, ymin=-25, xmin=-25, ymax=25, xmax=30]
\addplot+[data cs=polarrad,domain=0:5.1,samples=1500,no markers] (2*pi*\x,\x*\x);
\addplot+[mark options={color=black,},mark=*,mark size=1.25pt,data cs=polarrad] coordinates {(0,20.5)};
\addplot+[mark options={color=black,},mark=*,mark size=1.25pt,data cs=polarrad] coordinates {(0,-1.2)};
\node[above] at (axis cs:20.5,0) {\tiny$f(z_j)$};
\node[above left] at (axis cs:-.9,0) {\tiny$f(0)$};
\end{axis}
\end{tikzpicture}
 \caption{}
\end{figure}

It is easy to calculate that
\begin{align*}
|f(z_j)| \approx j^{\alpha +1},\;\;\; |f(z_j)|_I \approx j^{\alpha +2}, \;\;\; \text{and} \;\;\; k(0, f(z_j)) \approx j^2.
\end{align*}
Then by (\ref{hypmetricinv}) 
\begin{eqnarray*}
|f(z_j)| \approx \left(\log\frac{1}{1-|z_j|}\right)^{\frac{1+\alpha}{2}}
\end{eqnarray*}
and
\begin{eqnarray*}
|f(z_j)|_I \approx |f(z_j)|\left(\log\frac{1}{1-|z_j|}\right)^\frac12.
\end{eqnarray*}
It easily follows that 
\begin{eqnarray*}
M(r,f) \approx \left(\log \frac{1}{1-r} \right)^{\frac{1+\alpha}{2}}\;\;\;\textnormal{and}\;\;\;M_I(r,f) \approx \left(\log\frac{1}{1-r} \right)^{\frac{2+\alpha}{2}}.
\end{eqnarray*}
If $\psi(t) = \exp(t^{\frac{2}{1+\alpha}})-1$ then $\psi$ is a growth function that is not doubling. By a theorem in \cite{hpsiqc}, $f\in H^\psi$ if and only if there exists $\delta > 0$ such that $\int_0^1\psi(\delta M(r,f))dr < \infty$. This integral converges in our example by choosing a sufficiently small $\delta$, and so $f\in H^\psi$. 

For the intrinsic case (\ref{internaldiamupperbound}) implies
\begin{eqnarray*}
\int_{\Sn} \psi(\delta |f(r\omega)|_I) d\sigma \gtrsim (1-r)\psi(\delta M_I(r,f)).
\end{eqnarray*}
Thus by the previous calculations
\begin{eqnarray*}
\int_{\Sn} \psi(\delta |f(r\omega)|_I) d\sigma \gtrsim (1-r)\exp\left(C\delta\left(\log \frac{1}{1-r}\right)^{\frac{2+\alpha}{1+\alpha}}\right),
\end{eqnarray*} 
which diverges as $r \rightarrow 1$ no matter the chosen $\delta$, and so $f \notin H^\psi_I$. 
\bibliographystyle{plain}
\bibliography{firstpaperref}

\begin{thebibliography}{10}

\bibitem{hpqc}
K.~Astala and P.~Koskela.
\newblock ${H}^p$-theory for quasiconformal maps.
\newblock {\em Pure Appl. Math. Q.}, 7(1):19 -- 50, 2011.

\bibitem{hpsiqc}
S.~Benedict.
\newblock Characterizations of {H}ardy-{O}rlicz spaces of quasiconformal
  mappings.
\newblock {\em To appear}.

\bibitem{hardyorlicz}
A.~Bonami and S.~Grellier.
\newblock Hankel operators and weak factorization for {H}ardy-{O}rlicz spaces.
\newblock {\em Colloq. Math.}, 118(1):107--132, 2010.

\bibitem{conformalmetrics}
M.~Bonk, P.~Koskela, and S.~Rohde.
\newblock Conformal metrics on the unit ball in euclidean space.
\newblock {\em Proc. London Math. Soc.}, 3(77):635 -- 664, 1998.

\bibitem{whitneyref}
R.R. Coifman and G.~Weiss.
\newblock {\em Analyse Harmonique Non-Commutative sur Certains Espaces
  Homog{\'e}nes - Lecture Notes in Mathematics, Vol.242,}.
\newblock Academic Press, New York, 1971.

\bibitem{deeb2}
W.~Deeb, R.~Khalil, and M.~Marzuq.
\newblock Isometric multiplication of {H}ardy-{O}rlicz spaces.
\newblock {\em Bull. Austral. Math. Soc.}, 34:177 -- 189, 1986.

\bibitem{deeb1}
W.~Deeb and M.~Marzuq.
\newblock H($\psi$) spaces.
\newblock {\em Canad. Math. Bull.}, 29(3):295 -- 301, 1986.

\bibitem{durenhp}
P.~Duren.
\newblock {\em Theory of $H^p$ spaces}.
\newblock Academic Press, Inc., 1970.

\bibitem{garnett}
J.~Garnett.
\newblock {\em Bounded analytic functions}.
\newblock Academic Press, New York, 1981.

\bibitem{GH}
F.W. Gehring and W.K. Hayman.
\newblock An inequality in the theory of conformal mapping.
\newblock {\em J. Math. Pures Appl.}, 41(9):353--361, 1962.

\bibitem{khalil}
R.~Khalil.
\newblock Inclusions of {H}ardy {O}rlicz spaces.
\newblock {\em Internat. J. Math. and Math. Sci.}, 9(3):429 -- 434, 1986.

\bibitem{pommerenke}
Ch. Pommerenke.
\newblock {\em Boundary Behaviour of Conformal Maps}.
\newblock Springer, 1992.

\bibitem{pcarleson}
B.F. Sehba.
\newblock $p$-{C}arleson measures for a class of {H}ardy-{O}rlicz spaces.
\newblock {\em Int. J. Math. Math. Sci.}, 2011, 2011.

\bibitem{vaisala}
J.~V{\"a}is{\"a}l{\"a}.
\newblock {\em Lectures on n-Dimensional Quasiconformal Mappings}.
\newblock Springer-Verlag Berlin., 1971.

\bibitem{zins}
M.~Zinsmeister.
\newblock A distortion theorem for quasiconformal mappings.
\newblock {\em Bull. Soc. Math. France}, 114:123 -- 133, 1986.

\end{thebibliography}
\vspace{1 pc}
\noindent  Pekka Koskela \& Sita Benedict\\
Department of Mathematics and Statistics\\
University of Jyv\"{a}skyl\"{a}\\
P.O. Box 35 (MaD)\\
FI-40014\\
Finland\\

\noindent{\it E-mail addresses}: \texttt{pkoskela@maths.jyu.fi}, \texttt{sita.c.benedict@jyu.fi}
\end{document}